\theoremstyle{plain}
\newtheorem{theorem2}{Theorem}
\newtheorem{theorem}{Theorem}
\newtheorem*{theorem*}{Theorem}
\newtheorem{lemma}[theorem2]{Lemma}
\theoremstyle{definition}
\theoremstyle{remark}
\numberwithin{equation}{section}
\begin{document}

\title{Bounding the invariant spectrum when the scalar curvature is non-negative.}
\author{Stuart J. Hall}
\address{School of Mathematics, Statistics and Physics, Herschel Building, Newcastle University, Newcastle upon Tyne, NE1 7RU, UK}
\email{stuart.hall@newcastle.ac.uk}
\author{Thomas Murphy}
\address{Department of Mathematics, California State University Fullerton, 800 N. State College Blvd., Fullerton, CA 92831, USA.}
\email{tmurphy@fullerton.edu}
\maketitle

\begin{abstract}
On compact Riemannian manifolds  with a large isometry group we investigate the invariant spectrum of the ordinary Laplacian. For either a toric K\"ahler metric, or a rotationally-symmetric metric on the sphere, we produce upper bounds for all eigenvalues of  the invariant spectrum assuming non-negative scalar curvature.   
\end{abstract}

\section{Introduction}
\noindent

Motivation for finding universal bounds on the spectrum of metrics on the sphere comes from a famous result due to Hersch \cite{Her}: any metric $g$ on the sphere $\mathbb{S}^{2}$, normalised to have volume $4\pi$, satisfies
$$\lambda_{1}(g)\leq 2.$$ In fact, Hersch also proved that $\lambda_1(g)=2$ if and only if the metric $g$ is isometric to the canonical round metric.\\
\\
If $G \leqslant \mathrm{Iso}(g)$ is a subgroup of the isometry group of a metric $g$,  the $G$-invariant spectrum is defined by restricting the Laplacian to $G$-invariant functions.   The $k^{\mathrm{th}}$ $G$-invariant eigenvalue of the ordinary Laplacian is denoted $\lambda^G_k(g)$.   In this setting, the analogue of Hersch's Theorem for metrics invariant under the standard action of $\mathbb{S}^1$ on $\mathbb{S}^2$ was studied by Abreu and Freitas. They proved that no counterpart for Hersch's Theorem holds for the $\mathbb{S}^{1}$-invariant spectrum. More specifically, for any $c\in (0,\infty)$, they were able to construct metrics $g$ with volume $4\pi$ and  first invariant eigenvalue $\lambda_{1}^{\mathbb{S}^1}(g)=c$.\\
\\
However, under additional assumptions on the geometry defined by $g$, they also proved the following theorem which is our primary inspiration in this paper.  A related set of results was also proved independently by Engman \cite{E1} \cite{E2}. To state the result, denote by $\xi_{k}$  the $\frac{1}{2}(k+1)^{\mathrm{th}}$ positive zero of the Bessel function $J_{0}$ if $k$ is odd, and the $\frac{1}{2}k^{\mathrm{th}}$ positive zero of $J_{0}'$ if $k$ is even. 

\begin{theorem*}[Abreu--Freitas, \cite{AF}]
Let $g$ be a metric on $\mathbb{S}^{2}$ which is invariant under the standard action of $\mathbb{S}^{1}$, with non-negative Gaussian curvature and  volume $4\pi$. Then 
$$\lambda_{k}^{\mathbb{S}^{1}}(g)< \frac{1}{2}\xi_{k}^{2}.$$
\end{theorem*}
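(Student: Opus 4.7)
The plan is to reduce to a one-dimensional Sturm--Liouville problem, pass to the area coordinate, and compare pointwise with the singular \emph{football} metric that saturates the given constraints. Writing the metric in geodesic polar form $g = dr^2 + h(r)^2\, d\theta^2$ on $[0,L]$, smoothness at the two poles forces $h(0)=h(L)=0$ and $h'(0)=-h'(L)=1$; the identity $K=-h''/h$ turns $K\geq 0$ into concavity of $h$; and the volume condition becomes $\int_0^L h(r)\,dr = 2$. An $\mathbb{S}^1$-invariant eigenfunction depends only on $r$, so the invariant spectrum admits the variational characterization
\begin{equation*}
\lambda_k^{\mathbb{S}^1}(g) = \min_{\dim W = k+1}\ \max_{0\neq f\in W} \frac{\int_0^L h(r)\,f'(r)^2\,dr}{\int_0^L h(r)\,f(r)^2\,dr}.
\end{equation*}

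Next, introduce the coordinate $V(r)=\int_0^r h(t)\,dt$, a diffeomorphism $[0,L]\to[0,2]$. In this coordinate the Rayleigh quotient becomes
\begin{equation*}
R_g(f) = \frac{\int_0^2 h(V)^2\, f'(V)^2\, dV}{\int_0^2 f(V)^2\, dV},
\end{equation*}
where I write $h(V):=h(r(V))$ by abuse of notation. The key step is the pointwise bound $h(V)^2 \leq h_*(V)^2 := \min(2V,\,4-2V)$: from $\tfrac{d}{dr}(h^2/2 - V) = h(h'-1)$ together with $h'\leq 1$ (by concavity and $h'(0)=1$) one obtains $h^2\leq 2V$ on $[0,L]$, and the mirror computation starting from $r=L$ gives $h^2\leq 4-2V$. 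The profile $h_*$ is precisely that of the double-cone football metric, two flat cones of slant radius $\sqrt{2}$ glued along their boundary.

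The football spectrum is then computed explicitly: on each half $V\in[0,1]$, the substitution $u=\sqrt{2V}$ converts the eigen-equation coming from $h_*^2=2V$ into the radial Bessel equation, whose solutions regular at $V=0$ are $J_0(\sqrt{\lambda}\,u)$. Decomposing into symmetric and antisymmetric modes across the equator $V=1$ imposes the Neumann condition $J_0'(\sqrt{2\lambda})=0$ or the Dirichlet condition $J_0(\sqrt{2\lambda})=0$ respectively, producing exactly the eigenvalues $\xi_k^2/2$ in the order prescribed in the statement. The pointwise comparison $h^2\leq h_*^2$ then gives $R_g(f)\leq R_*(f)$ for every admissible test function, and the min--max principle yields the weak inequality $\lambda_k^{\mathbb{S}^1}(g)\leq \xi_k^2/2$.

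The remaining issue, and the subtlest part of the argument, is the strict inequality. Taking as $(k+1)$-dimensional test space the span of the first $k+1$ Bessel eigenfunctions of the football and assuming equality in the Rayleigh comparison would force $h\equiv h_*$ on the set where the derivative of the extremizing eigenfunction is nonzero; since those derivatives are analytic Bessel combinations with only isolated zeros, this would force $h\equiv h_*$ on an open neighborhood of the equator, contradicting the smoothness of $h$ (the football profile $h_*$ has a corner at $V=1$). The main obstacle is therefore pinning down this strict inequality; once the right area coordinate and comparison metric are identified, the comparison itself is a clean Sturm--Liouville computation.
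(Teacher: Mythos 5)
This statement is quoted in the paper as background and attributed to Abreu--Freitas; the paper itself gives no proof of it, so there is nothing internal to compare against line by line. Judged on its own, your argument is essentially correct and is in fact the sharp, two-dimensional ancestor of the strategy the paper uses for Theorem B: your area coordinate $V$ is the paper's coordinate $s$ (up to the volume normalisation $4\pi$ versus $1$), your pointwise bound $h^{2}\leq\min(2V,4-2V)$ is exactly the $n=2$ case of Lemma 2 ($\Phi\leq\Phi_{\max}$), and the derivation $h'\leq 1$ from concavity is the $2$-dimensional instance of Lemma 1. Where the paper stops at ``the Rayleigh quotient is bounded by a metric-independent quantity'' evaluated on polynomial test spaces, you push the comparison all the way: you identify the extremal weight as the glued flat discs, diagonalise that Sturm--Liouville problem via $u=\sqrt{2V}$ into the radial Bessel equation, and read off the sharp constants $\xi_{k}^{2}/2$, with the interlacing of the zeros of $J_{0}$ and $J_{0}'$ giving the stated ordering. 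This is the right mechanism and, as far as I can tell, also the mechanism of the original Abreu--Freitas proof. Two points deserve tightening before this counts as complete. First, you should justify that the singular comparison problem $-(h_{*}^{2}f')'=\lambda f$ on $[0,2]$ genuinely has discrete spectrum exhausted by the symmetric (Neumann-at-equator) and antisymmetric (Dirichlet-at-equator) Bessel modes, with the min--max characterisation valid on the form domain; the endpoints $V=0,2$ are regular singular points and the regularity condition selecting $J_{0}$ over $Y_{0}$ should be stated. Second, in the strictness argument you should rule out that the extremiser $f_{0}\in W$ of $R_{g}$ has identically vanishing derivative on a half-interval: this follows because the restrictions of the football eigenfunctions to one hemisphere are eigenfunctions of the half-problem with pairwise distinct eigenvalues (the zeros of $J_{0}$ and $J_{0}'$ never coincide), hence linearly independent, so a nonconstant $f_{0}$ has $f_{0}'$ vanishing only on a discrete set; then $h^{2}=h_{*}^{2}$ everywhere by continuity, and the jump of $(h_{*}^{2})'$ at $V=1$ contradicts $h\in C^{1}$. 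With those details supplied the proof is sound.
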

\noindent
In the present note we are concerned with  two natural generalisations of this theorem to higher dimensions.  In both of them the  analogue of non-negative Gaussian curvature is to assume the metrics have  non-negative scalar curvature.  The two generalisations correspond to two different ways of viewing $\mathbb{S}^{1}$-invariant metrics on $\mathbb{S}^{2}$. The first is to realize $\mathbb{S}^{2}$ as a complex manifold (with complex dimension one), $g$ as a K\"ahler metric, and to view $\mathbb{S}^{1}$ as a real, one-dimensional torus $\mathbb{T}$. In higher dimensions, a well-developed theory of K\"ahler metrics   which are invariant under the Hamiltonian action of a real $n$-torus $\mathbb{T}^{n}$ exists, namely \textit{toric K\"ahler manifolds}.  In this setting we prove the following result.  
\begin{theorem} \label{TKthm}
Let $(M,\omega)$ be a compact toric K\"ahler manifold with non-negative scalar curvature. Then there exist constants $0<C_{k}([\omega])$, depending only upon the cohomology class $[\omega] \in H^{2}(M;\mathbb{R})$, such that 
$$\lambda^{\mathbb{T}^{n}}_{k}\leq C_{k}.$$
\end{theorem}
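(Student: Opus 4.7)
The plan is to reduce the invariant eigenvalue problem to a variational problem on the moment polytope $\Delta = \mu(M) \subset \RR^n$ and then apply the min-max principle with polynomial test functions; non-negative scalar curvature will enter through Abreu's formula after an integration by parts.

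The first step is a standard reduction via the Guillemin--Abreu formalism. On the dense subset where $\mathbb{T}^n$ acts freely we write
\[
g = u_{ij}(x)\, dx^i \otimes dx^j + u^{ij}(x)\, d\theta^i \otimes d\theta^j,
\]
where $u \colon \Delta^\circ \to \RR$ is the symplectic potential and $u^{ij}$ is the inverse of its Hessian. For an invariant function $f = F\circ\mu$,
\[
\int_M f^2\, dV_g = (2\pi)^n\!\int_\Delta F^2\, dx, \qquad \int_M |\D f|_g^2\, dV_g = (2\pi)^n\!\int_\Delta u^{ij}\,\partial_i F\,\partial_j F\, dx,
\]
so the invariant Rayleigh quotient reduces to a ratio of polytope integrals. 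The polytope $\Delta$ and its Lebesgue measure depend only on $[\omega]$, so any $(k+1)$-dimensional subspace $V_k \subset C^\infty(\Delta)$ of polynomials of bounded degree, orthonormalised with respect to $L^2(\Delta, dx)$, is canonically determined by $[\omega]$. By min-max it will suffice to bound the Rayleigh quotient uniformly on $V_k$ as the K\"ahler metric varies subject to $S \geq 0$.

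To exploit the curvature condition I would integrate by parts twice and apply Abreu's identity $S = -\sum_{i,j}\partial_i\partial_j u^{ij}$, arriving at
\[
\int_\Delta u^{ij}\partial_i F\,\partial_j F\, dx + \tfrac{1}{2}\int_\Delta F^2 S\, dx + \int_\Delta F\, u^{ij}\partial_i\partial_j F\, dx = \mathcal{B}(F),
\]
where $\mathcal{B}(F)$ collects boundary integrals on the facets of $\partial\Delta$. The Guillemin boundary conditions on $u$ (namely $u - \tfrac{1}{2}\sum_k \ell_k\log\ell_k \in C^\infty(\bar\Delta)$) prescribe the degeneration of $u^{ij}$ along each facet and should reduce $\mathcal{B}(F)$ to a combinatorial expression in $\Delta$ and $F|_{\partial\Delta}$. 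Since $S \geq 0$, one may discard the middle term, obtaining an upper bound for the Dirichlet integral in terms of $\mathcal{B}(F)$ plus the residual term $\int_\Delta F\, u^{ij}\partial_i\partial_j F\, dx$. For $F$ of degree one (constants and coordinate functions) this residual vanishes outright, yielding bounds for $\lambda_k^{\mathbb{T}^n}$ for $k \leq n$; for larger $k$, one iterates the same scheme on the integrals $\int_\Delta u^{ij} p(x)\, dx$ for polynomial $p$ that arise.

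The main obstacle will be managing $\mathcal{B}(F)$ near the codimension-$\geq 2$ strata of $\partial\Delta$, where $u$ carries combined logarithmic singularities from several facets and $u^{ij}$ degenerates anisotropically; one must verify that the formal boundary integrals are well-defined and collapse to polytope-combinatorial data. A secondary challenge is sign tracking in the iteration, since the residual terms can have either sign. Here I expect to pair the IBP identity with the topological identity that $\int_\Delta S\, dx$ is a fixed multiple of $c_1([\omega]) \cdot [\omega]^{n-1}$, which converts the pointwise condition $S \geq 0$ into an absolute $L^1$ bound on $S$ and closes the estimate.
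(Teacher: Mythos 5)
Your overall architecture --- reduce to the moment polytope, take a $(k+1)$-dimensional space of polynomial test functions, apply min-max, and use Abreu's formula via Donaldson's integration by parts to discard a term carrying the sign of the scalar curvature --- is exactly the paper's. The gap is in how you treat the Dirichlet integral. Your double integration by parts gives $\int_{P}u^{ij}\partial_{i}F\,\partial_{j}F\,d\mu=\int_{\partial P}F^{2}\,d\sigma-\tfrac{1}{2}\int_{P}SF^{2}\,d\mu-\int_{P}F\,u^{ij}\partial_{i}\partial_{j}F\,d\mu$; discarding the curvature term is fine, but the residual $\int_{P}F\,u^{ij}\partial_{i}\partial_{j}F\,d\mu$ is metric-dependent and your proposed iteration does not close. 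Donaldson's formula applies only to integrands of the form $u^{ij}G_{ij}$ with $G_{ij}$ an honest Euclidean Hessian, and for a genuinely multivariable polynomial $F$ the matrix $F\,\partial_{i}\partial_{j}F$ is not a Hessian (for $F=x_{1}x_{2}$ it is the matrix with off-diagonal entry $x_{1}x_{2}$ and zero diagonal, which violates the integrability conditions), so the scheme cannot be reapplied to the residual. The topological identity $\int_{P}S\,d\mu=2\sigma(\partial P)$ gives an $L^{1}$ bound on $S$ but says nothing about this residual, which involves $u^{ij}$ rather than $S$. Your degree-one case is correct and bounds $\lambda_{k}^{\mathbb{T}^{n}}$ for $k\leq n$; beyond that the argument as written is incomplete.

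The paper's fix is to give up multivariable test functions: take $V_{k}=\mathrm{span}\{1,x,\dots,x^{k}\}$ in a \emph{single} moment coordinate $x$ (still $(k+1)$-dimensional, so min-max applies unchanged), and, rather than peeling off $\tfrac{1}{2}(F^{2})_{xx}$, realise the whole integrand $(\varphi_{x})^{2}$ as a Hessian directly: let $\varrho$ solve $\varrho_{xx}=(\varphi_{x})^{2}$ with $\varrho=\varrho_{x}=0$ at the midpoint of the coordinate interval. Then $\varrho$ is convex and non-negative, and one application of Donaldson's Lemma 3.3.5 gives $\int_{P}\|\nabla\varphi\|^{2}\,d\mu=\int_{P}u^{ij}\varrho_{ij}\,d\mu=\int_{\partial P}2\varrho\,d\sigma-\int_{P}S\varrho\,d\mu\leq\int_{\partial P}2\varrho\,d\sigma$, with no residual term at all and a right-hand side depending only on the polytope; the lemma also disposes of your worry about the boundary integrals near the lower-dimensional strata, since the measure $\sigma$ is already defined there. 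If you wanted to rescue the multivariable version, your degree-one computation already yields $\int_{P}u^{ij}\xi_{i}\xi_{j}\,d\mu\leq\int_{\partial P}(\xi\cdot x)^{2}\,d\sigma$ for each fixed vector $\xi$, hence an a priori $L^{1}$ bound on the matrix $u^{ij}$ that controls the residual by $\sup_{P}|F\,\partial^{2}F|$; but this is extra work that the paper's choice of test functions renders unnecessary.
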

\noindent
The $k=1$ bound in Theorem \ref{TKthm} was established by the authors with a slightly different proof  in \cite{HMMRL}; here  a refined method yields all higher eigenvalue bounds.\\ 
\\
The second viewpoint we take is to identify  $\mathbb{S}^{1}\cong SO(2)$ and  $SO(2) \leqslant SO(3)$ via the embedding
$$ g\in SO(2) \rightarrow \left(\begin{array}{cc} 1 & 0\\ 0 & g  \end{array} \right) \in SO(3).$$
The action is  inherited from the natural action of $SO(3)$ on $\mathbb{R}^{3}$ by regarding the sphere as a hypersurface $\mathbb{S}^{2}\subset\mathbb{R}^{3}$. In higher dimensions this leads to the action of the group $SO(n)$ on the $n$-dimensional sphere $\mathbb{S}^{n}\subset\mathbb{R}^{n+1}$.  This example is part of another well-established theory, that of \textit{cohomogeneity one metrics}. We prove the following result. 
\begin{theorem} \label{C1thm}
There exist constants $D_{k}>0$ such that for any unit-volume, $SO(n)$-invariant metric $g$ on $\mathbb{S}^{n}$ with non-negative scalar curvature, we have the bounds
$$\lambda_{k}^{SO(n)}(g)\leq D_{k}.$$  
\end{theorem}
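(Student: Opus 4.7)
The plan is to exploit the cohomogeneity-one structure: every $SO(n)$-invariant metric on $\mathbb{S}^n$ can be written as $g = dt^2 + \phi(t)^2 g_{\mathbb{S}^{n-1}}$ on $[0,L]\times\mathbb{S}^{n-1}$, with smoothness at the two pole orbits requiring $\phi(0)=\phi(L)=0$ and $\phi'(0)=-\phi'(L)=1$. Since $SO(n)$-invariant functions depend only on $t$, the invariant Rayleigh quotient reduces to
$$R(f)=\frac{\int_0^L f'(t)^2 \phi^{n-1}\,dt}{\int_0^L f(t)^2 \phi^{n-1}\,dt},$$
and the unit-volume hypothesis becomes $\omega_{n-1}\int_0^L\phi^{n-1}\,dt=1$, where $\omega_{n-1}=\vol(\mathbb{S}^{n-1})$.

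The crucial analytic input is a pointwise Lipschitz bound on $\phi$. From the warped-product formula $R = (n-1)\phi^{-2}\bigl[-2\phi\phi'' + (n-2)(1-(\phi')^2)\bigr]$, a direct computation yields
$$\frac{d}{dt}\bigl[\phi^{n-2}(1-(\phi')^2)\bigr]=\frac{R\,\phi^{n-1}\phi'}{n-1}.$$
The function $h:=\phi^{n-2}(1-(\phi')^2)$ satisfies $h(0)=h(L)=0$, and under $R\geq 0$ the sign of $h'$ matches that of $\phi'$. Together with the fact that $h=\phi^{n-2}\geq 0$ at every interior critical point of $\phi$, this forces $h\geq 0$ on $[0,L]$, giving $|\phi'|\leq 1$. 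Letting $M=\max_t\phi(t)$ attained at some $t^*$, the Lipschitz bound yields $t^*\in[M,L-M]$ and $\phi(t)\geq\max(0,M-|t-t^*|)$; hence $\int_0^L\phi^{n-1}\,dt\geq 2M^n/n$, and the unit-volume condition delivers the \textit{a priori} bound
$$M\leq M_{\max}:=\bigl(n/(2\omega_{n-1})\bigr)^{1/n},$$
depending only on $n$.

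For the min--max step I will pass to the normalised cumulative-volume coordinate $s(t)=\omega_{n-1}\int_0^t\phi(\tau)^{n-1}\,d\tau\in[0,1]$ and use the trial family $f_j(t)=\cos(j\pi s(t))$ for $j=0,1,\ldots,k$. The substitution $ds=\omega_{n-1}\phi^{n-1}\,dt$ makes the $f_j$ orthogonal in $L^2([0,L],\phi^{n-1}\,dt)$ and converts
$$\int_0^L f_j'(t)^2\phi^{n-1}\,dt = j^2\pi^2\omega_{n-1}\int_0^1\sin^2(j\pi s)\,\phi^{2(n-1)}(t(s))\,ds.$$
The bound $\phi^{2(n-1)}\leq M_{\max}^{2(n-1)}$ combined with the orthogonality of $\{\sin(j\pi s)\}$ on $[0,1]$ shows that every $f=\sum_{j=0}^k c_jf_j$ satisfies $R(f)\leq \pi^2 k^2\omega_{n-1}^2 M_{\max}^{2(n-1)}$, so min--max over this $(k+1)$-dimensional subspace yields the theorem with
$$D_k=\pi^2 k^2\omega_{n-1}^2 M_{\max}^{2(n-1)}.$$

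The main technical hurdle is the sign argument giving $|\phi'|\leq 1$: the endpoints are singular orbits where both $\phi$ and $\phi^{n-2}$ degenerate, and $\phi$ may have several interior critical points, so one must proceed by contradiction (if $h$ dipped below zero between two consecutive zeros, any intermediate critical point of $\phi$ would produce the contradiction $h=\phi^{n-2}\geq 0$). Once this Lipschitz estimate is in place, the cap on $M$ and the cosine trial family reduce the eigenvalue bound to a routine weighted Fourier calculation.
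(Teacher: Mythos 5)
Your proposal is correct and follows the same overall strategy as the paper: reduce to a one-dimensional Rayleigh quotient, use non-negative scalar curvature to prove the Lipschitz bound $|\phi'|\leq 1$, convert this into a metric-independent pointwise bound on the warping function in the cumulative-volume coordinate $s$, and then test against an explicit $(k+1)$-dimensional family of functions of $s$. Two steps are executed differently. For the Lipschitz bound the paper argues directly: if $|\dot{\rho}|>1$ somewhere then $\dot{\rho}$ has an interior global extremum $t^{*}$ with $|\dot{\rho}(t^{*})|>1$, at which $\ddot{\rho}=0$ and the scalar curvature formula forces $\mathrm{Scal}(t^{*})=(n-1)(n-2)(1-\dot{\rho}^{2})/\rho^{2}<0$. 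Your identity $h'=\tfrac{R}{n-1}\phi^{n-1}\phi'$ for $h=\phi^{n-2}\bigl(1-(\phi')^{2}\bigr)$ checks out, and your sign argument does close (follow $h$ along a maximal monotonicity interval of $\phi$ until you hit either an endpoint, where $h=0$, or an interior critical point of $\phi$, where $h=\phi^{n-2}>0$); it has the incidental advantage of covering $n=2$, where the paper's argument degenerates because the coefficient $(n-1)(n-2)$ vanishes. For the min--max step the paper keeps the sharper tent-shaped bound $\Phi\leq\Phi_{\max}(s)$, tests against the polynomials $1,s,\dots,s^{k}$, and invokes continuity of the bounding functional plus compactness of the unit sphere in coefficient space, so its $D_{k}$ is non-constructive; you use only $\max\phi\leq M_{\max}$, but in exchange the cosine family and the orthogonality of $\{\sin(j\pi s)\}$ on $[0,1]$ deliver the fully explicit constant $D_{k}=\pi^{2}k^{2}\omega_{n-1}^{2}M_{\max}^{2(n-1)}$. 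Both routes are valid; yours trades a little sharpness in the pointwise bound for an explicit eigenvalue bound.
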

\noindent
We should say immediately that Theorem \ref{C1thm} is not really new and is essentially contained in the work of Colbois, Dryden and El Soufi (c.f. Theorem 1.7 in \cite{CDE}). However, in our proof one can see very directly how the assumption on the non-negativity of the scalar curvature is used.  

\section{Background}
\subsection{Previous results in the literature}
 In all the results we mention here, $M$ is assumed to be compact and without boundary.  Korevaar \cite{Kor}, building on work by Yang and Yau \cite{YY}, gave a far-reaching generalisation of the Hersch result to higher eigenvalues proving that, for any oriented Riemannian surface $(S,g)$ of genus $\gamma$,  there exists a universal constant $C>0$ such that
$$\lambda_{k}(g)\leq \frac{C(\gamma+1)k}{\mathrm{Vol}(S,g)},$$
where $\lambda _{k}(g)$ is the $k^{\mathrm{th}}$ eigenvalue in the spectrum of the Laplacian.\\
\\
In the case that the manifold has dimension 3 or higher, Colbois and Dodziuk \cite{CD} proved that there is a unit volume metric $g$ with arbitrarily large $\lambda_{1}(g)$; hence there is no hope of a general analogue of  the Hersch and Korevaar results for arbitrary manifolds.  However, in the case that the manifold $M$ is a compact, projective, complex manifold, Bourguignon, Li and Yau \cite{BLY} demonstrated an upper bound for $\lambda_{1}(g)$ depending upon some of the data associated with embedding $M$ into complex projective space $\mathbb{CP}^{N}$.  This result has been recently extended to higher eigenvalues by Kokarev \cite{Kok}.\\
\\
The study of the invariant spectrum for the (effective and non-transitive) action for a general compact Lie group $G$ was taken up by Colbois, Dryden and El Soufi in \cite{CDE}. They proved that, if the dimension of $G$ is at least one, then 
$$\sup\left\{\lambda_{1}^{G}(g)\mathrm{Vol}(g)^{2/n}\right\} =\infty,$$
where the supremum is taken over all $G$-invariant metrics conformal to a fixed reference metric $g_{0}$.

Finally, in the case of $n$-complex-dimensional toric K\"ahler manifolds, we mention that Legendre and Sena-Dias \cite{LSD} have generalised the results of Abreu and Freitas to show that the first $\mathbb{T}^{n}$-invariant eigenvalue, $\lambda_{1}^{\mathbb{T}^{n}}(g)$, can take any value in $(0,\infty)$  when the metric $g$ is allowed to vary over the set of toric Kahler metrics in a fixed cohomology class.  The Colbois--Dryden--El Soufi and Legendre--Sena-Dias results make no assumption about the curvature of the metrics and so do not contradict Theorems A and B. 

\subsection{Toric K\"ahler manifolds}
In order to prove Theorem \ref{TKthm} we recall some facts about toric K\"ahler metrics.  We cannot hope  to give a comprehensive introduction to this subject but refer readers to the works of Abreu \cite{A} and Donaldson \cite{D} which both contain detailed discussions of the theory.  In a nutshell, associated to any Hamiltonian action of an $n$-torus $\mathbb{T}^{n}$ on an $n$-complex-dimensional K\"ahler manifold $(M,\omega,J)$ is a convex polytope $P\subset \mathbb{R}^{n}$.  The polytope $P$, often referred to in the literature as the {\it moment polytope}, depends only upon the cohomology class of the K\"ahler metric ${[\omega]\in H^{2}(M;\mathbb{R})}$. There is also a dense open set $M^{\circ}\subset M$ such that $M^{\circ} \cong P^{\circ}\times (0,2\pi)^{n}$,  where $P^{\circ}$ is the interior of $P$.  In the natural coordinates, the metric ${g(\cdot,\cdot) := \omega(J\cdot,\cdot)}$ is given by
$$ g = u_{ij}dx_{i}dx_{j}+u^{ij} d\theta_{i}d\theta_{j},$$
where $u_{ij}$ is the Euclidean Hessian of a convex function $u:P^{\circ}\rightarrow \mathbb{R}$ and $u^{ij}$ is the inverse of this matrix. The function $u$ is referred to as the \textit{symplectic potential} of the metric $g$. Clearly, in these coordinates, the volume form of the metric is the standard Euclidean form 
$$dV = dx_{1}\wedge dx_{2} \wedge ...\wedge dx_{n}\wedge d\theta_{1}\wedge d\theta_{2} \wedge...\wedge d\theta_{n}.$$ Crucial to our proof is the formula for the scalar curvature in the $(x_{i},\theta_{j})$ coordinates which was first given by Abreu \cite{A2} (often called the Abreu equation in the literature):
$$\mathrm{Scal}(g) = -\sum_{i,j} \frac{\partial^{2} u^{ij}}{\partial x_i\partial x_j} .$$
This expression yields the following useful integration-by-parts formula due to Donaldson \cite{D2}.
\begin{lemma}[Donaldson's Integration-by-parts formula, Lemma 3.3.5 in \cite{D2}]
Let ${u:P^{\circ}\rightarrow\mathbb{R}}$ be a symplectic potential of a toric K\"ahler metric $g$ and let $F\in C^{\infty}(P)$. Then
\begin{equation}\label{DIBP}
\int_{P}u^{ij}F_{ij}d\mu = \int_{\partial P}2Fd\sigma-\int_{P}\mathrm{Scal}(g)Fd\mu,
\end{equation}
where $\mu$ is the usual Lesbegue measure on $\mathbb{R}^{n}$ and $\sigma$ is the integral Lesbegue measure on the boundary $\partial P$. 
\end{lemma}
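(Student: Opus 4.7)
The plan is to integrate by parts twice on $P$ in the Euclidean sense, use Abreu's formula to identify the interior integrand with $-\mathrm{Scal}(g)$, and use the Guillemin boundary conditions on the symplectic potential to extract the boundary contribution. Recall that near each facet $F_k = \{\ell_k = 0\}$, with $\ell_k$ the defining affine function and $\nu_k$ its primitive integer conormal, the symplectic potential decomposes as
\[
u(x) = \tfrac{1}{2}\ell_k(x)\log\ell_k(x) + f_k(x)
\]
with $f_k$ smooth up to $F_k$. Consequently $u_{ij}$ has a rank-one $\ell_k^{-1}$ singularity in the direction of $\nu_k$, whereas its inverse $u^{ij}$ extends smoothly to $\partial P$ and annihilates the conormal: $u^{ij}(\nu_k)_j = 0$ on $F_k$.

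First I would rewrite $u^{ij}F_{ij} = \partial_j(u^{ij}F_i) - (\partial_j u^{ij})F_i$ and apply the Euclidean divergence theorem, obtaining
\[
\int_P u^{ij}F_{ij}\,d\mu = \int_{\partial P}u^{ij}F_i n_j\,d\sigma_{\mathrm{eucl}} - \int_P (\partial_j u^{ij})F_i\,d\mu,
\]
whose boundary term vanishes because $u^{ij}n_j = 0$ on every facet. A second integration by parts, this time in the $i$-direction, yields
\[
\int_P u^{ij}F_{ij}\,d\mu = -\int_{\partial P}(\partial_j u^{ij})F n_i\,d\sigma_{\mathrm{eucl}} + \int_P (\partial_i\partial_j u^{ij})F\,d\mu,
\]
and the interior integral equals $-\int_P \mathrm{Scal}(g)F\,d\mu$ by the Abreu equation.

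It remains to show that on each facet $F_k$ one has $-(\partial_j u^{ij})n_i = 2/|\nu_k|$; combined with the standard identification $d\sigma_{\mathrm{eucl}} = |\nu_k|\,d\sigma$ between the Euclidean and lattice-normalized measures on $F_k$, the $|\nu_k|$ factors cancel to give exactly $\int_{\partial P} 2F\,d\sigma$. To verify this I would invert $u_{ij} = \nu_i\nu_j/(2\ell_k) + (f_k)_{ij}$ via a Sherman--Morrison calculation, obtaining an explicit formula for $u^{ij}$ in a neighborhood of $F_k$; differentiating and contracting against $\nu_i$, the only surviving boundary value is the universal constant $\nu_i\partial_j u^{ij}|_{F_k} = 2$, with all of the smooth-part contributions cancelling pairwise by the symmetry of $(f_k)_{ij}$.

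The main obstacle is technical: one must justify the two applications of the divergence theorem in the presence of the $\ell_k^{-1}$ singularity of $u_{ij}$, which can be handled by excising a small collar of $\partial P$ and passing to the limit using the fact that $u^{ij}$ and its first-order partials extend continuously to $\partial P$. With that justification in hand the remainder of the argument is essentially algebraic, and the appearance of the factor $2$ (rather than some other constant) is the key consequence of the specific form of the Guillemin expansion.
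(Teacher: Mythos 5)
The paper does not actually prove this lemma: it is imported verbatim (up to a change of convention) from Donaldson's paper, with the authors deferring the normalisation of the singular part of $u$ and the definition of $\sigma$ to \cite{HMMRL}. So there is no in-paper proof to compare against; what you have written is, in outline, the standard argument (and essentially Donaldson's own): two Euclidean integrations by parts, the first boundary term killed by $u^{ij}\nu_j=0$ on each facet, the interior term identified via Abreu's formula $\partial_i\partial_ju^{ij}=-\mathrm{Scal}(g)$, and the second boundary term evaluated using the second-order consequence $\nu_i\partial_ju^{ij}\to 2$ of the Guillemin boundary behaviour, with the factor $\abs{\nu_k}$ cancelling against the ratio of the Euclidean and lattice measures. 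This is correct, with three caveats worth tightening. First, you need to fix $\nu_k$ as the \emph{inward} primitive conormal with $\ell_k\geq 0$ on $P$, so that the outward Euclidean unit normal is $-\nu_k/\abs{\nu_k}$; otherwise the signs in $-(\partial_ju^{ij})n_i=2/\abs{\nu_k}$ do not come out. Second, in the Sherman--Morrison step the reason only the constant $2$ survives is not ``symmetry of $(f_k)_{ij}$'': writing $u^{ij}\nu_i=\frac{2\ell_k}{2\ell_k+\nu^TA^{-1}\nu}(A^{-1}\nu)^j$ with $A=(f_k)_{ij}$, the term in $\partial_j(u^{ij}\nu_i)$ in which the derivative falls on $(A^{-1}\nu)^j$ carries the factor $\frac{2\ell_k}{2\ell_k+\nu^TA^{-1}\nu}$, which vanishes on the facet, while the term in which it falls on the prefactor contributes $\frac{2\nu_j(A^{-1}\nu)^j}{\nu^TA^{-1}\nu}=2$. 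Third, Sherman--Morrison requires $A$ to be invertible near the facet, which is not automatic from ``$f_k$ smooth up to $F_k$'' and is where the full Guillemin/Abreu boundary conditions (in particular strict convexity of $u$ restricted to each facet) enter; alternatively one can quote the known boundary regularity of $u^{ij}$ directly. The excision-and-limit justification of the divergence theorem, including near the lower-dimensional faces, is as you say routine once these points are in place.
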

Here the subscripts on $F$ denote partial differentiation.  Equation (\ref{DIBP})  differs slightly from the expression in Lemma 3.3.5 in \cite{D2} since we adopt a different convention to describe  the singular part of the symplectic potential. We refer the reader to \cite{HMMRL} for futher discussion of our conventions and the precise definition of the integral Lesbegue measure $\sigma$.

\section{Toric K\"ahler eigenvalue bounds}
We now give the proof of Theorem \ref{TKthm}.  Central to the proof is the fact that $\sigma$ does not depend upon the metric $g$. Rather it depends only on the cohomology class of the associated K\"ahler form $\omega$. 
\begin{proof}
Fix the moment polytope $P \subset \mathbb {R}^{n}$ and pick a coordinate $x \in [x_{\min},x_{\max}]$. Throughout the proof we shall abuse notation by denoting by $f$ functions in this single variable ${f:[x_{\min},x_{\max}]\rightarrow \mathbb{R}}$ and the extension to the whole polytope $P$ given by $(f\circ \pi)$ where $\pi:P\rightarrow [x_{\min},x_{\max}]$ is the projection to this coordinate.\\
\\
Let $V_{k} \subset W^{1,2}$ be the subspace given by
$$V_{k} = \mathrm{span}\{1,x, x^{2},\dots, x^{k}\}.$$ 
Hence the min-max characterisation of $\lambda_{k}^{\mathbb{T}^{n}}$ yields 
$$
\lambda_{k}^{\mathbb{T}^{n}} \leq \sup_{\varphi \in V_{k}, \varphi \neq 0}\left\{ \frac{\int_{P}\|\nabla \varphi \|^{2}d \mu}{\int_{P}\varphi^{2}d\mu}\right\}.
$$
We denote by $\varrho:[x_{\min},x_{\max}]\rightarrow \mathbb{R}$ the unique function satisfying
$$\varrho_{xx} = (\varphi_{x})^{2} \qquad \mathrm{and} \qquad\varrho(x^{\ast})=\varrho_{x}(x^{\ast})=0,$$ 
where $x^{\ast} = \dfrac{x_{\min}+x_{\max}}{2}$.
As $\varrho$ is manifestly convex, the point $x^{\ast}$ is a global minimum and so $\varrho$ is non-negative on $[x_{\min},x_{\max}]$. 
\\

The Raleigh quotient thus furnishes us with a map ${R:V_{k}\rightarrow \mathbb{R}}$
$$
R(\varphi) = \frac{\int_{P}\|\nabla \varphi \|^{2}d \mu}{\int_{P}\varphi^{2}d\mu}.
$$
The numerator can be evaluated using Equation (\ref{DIBP})
$$
\int_{P}\|\nabla \varphi \|^{2}d \mu =\int_{P}u^{xx}(\varphi_{x})^{2} d\mu = \int_{P}u^{xx}\varrho_{xx}d\mu = \int_{\partial P}2\varrho d\sigma - \int_{P}\mathrm{Scal}(g)\varrho d\mu.
$$
As the final term in the previous equation is non-negative, we obtain the bound
$$
\int_{P}\|\nabla \varphi \|^{2}d \mu \leq \int_{\partial P}2\varrho d\sigma. 
$$
Hence
$$R(\varphi) \leq \frac{\int_{\partial P}2\varrho d \sigma}{\int_{P}\varphi^{2}d\mu},
$$
where the bound is independent of the metric. The function ${B: V_{k}\backslash \lbrace 0 \rbrace \rightarrow \mathbb{R}}$ given by
$$B(\varphi) = \frac{\int_{\partial P}2\varrho d \sigma}{\int_{P}\varphi^{2}d\mu}, $$
is continuous.  We demonstrate this by considering the numerator and denominator separately. We endow the space $V_k$ with the standard Euclidean topology.  The map ${\mathcal{F}_{1}:V_{k}\rightarrow V_{2(k-1)}}$ given by $\mathcal{F}_{1}(\varphi) = (\varphi_{x})^{2}$ is continuous (this could be seen by writing it out in coordinates).  The map ${\mathcal{F}_{2}:V_{2(k-1)}\rightarrow V_{2k}}$ given by $\mathcal{F}_{2}(f)= \varrho$, where $\varrho$ solves the equation
$$\varrho_{xx} = f \qquad \mathrm{and} \qquad\varrho(x^{\ast})=\varrho_{x}(x^{\ast})=0,$$
is linear and so continuous (note the domain is finite dimensional). The map ${\mathcal{F}_{3}:V_{2k}\rightarrow \mathbb{R}}$ given by
$$ \mathcal{F}_{3}(f) = \int_{\partial P}2fd\sigma,$$
is also linear and so continuous. The numerator of the function $B$ is given by the composition ${\mathcal{F}_{3}\circ\mathcal{F}_{2}\circ \mathcal{F}_{1}(\varphi)}$ and thus is continuous. A very similar argument yields the continuity of the denominator as a map from ${V_k\backslash \lbrace 0 \rbrace }$ to $\mathbb{R}$. Thus we obtain a continuous function $B: V_k\backslash \lbrace 0 \rbrace \rightarrow \mathbb{R}$.\\
\\
If we scale ${\varphi \rightarrow \alpha \varphi}$ for $\alpha \in \mathbb{R}$, then $\varrho \rightarrow \alpha^{2}\varrho$. Hence $B$ is determined by the values it takes on the functions $\varphi \in \mathbb{S}^{k}\subset V_{k}$ (where we take the obvious coordinates on $V_{k}$). Restricting $B$ to the sphere yields a continuous function, so   the constant $C_{k}$ can be determined by taking the maximum of $B$ as $\varphi$ varies over the sphere $\mathbb{S}^{k}$.  

\end{proof}

\section{Cohomogeneity one metrics on the sphere}

Let $g$ be a unit-volume metric on $\mathbb{S}^{n}$ given by $$g=dt^{2}+\rho^{2}(t)d\mathbb{S}^{2}_{n-1},$$ 
where $d\mathbb{S}^{2}_{n-1}$ is  the  unit-volume round  metric on the sphere $\mathbb{S}^{n-1}$ and $\rho:[0,b]\rightarrow \mathbb{R}$ is such that
$$
\rho(0)=\rho(b)=0, \qquad \dot{\rho}(0)=-\dot{\rho}(b)=1 \qquad \mathrm{and } \quad  \rho>0 \quad \mathrm{ on } \quad (0,b),
$$
where dots over a function denotes derivatives with respect to $t$.  In these coordinates the scalar curvature is given by
\begin{equation}\label{scaleqn}
\mathrm{Scal}(t) = -2(n-1)\frac{\ddot{\rho}}{\rho}+(n-1)(n-2)\frac{1-\dot{\rho}^{2}}{\rho^{2}}.
\end{equation}
This immediately yields the following result.
\begin{lemma}\label{rholem}
Let $\rho$ be as above and suppose the metric $g$ has non-negative scalar curvature. Then $$|\dot{\rho}|\leq 1.$$
\end{lemma}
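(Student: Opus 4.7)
The plan is to introduce a single auxiliary quantity whose sign controls $1-\dot\rho^2$ and then exploit a monotonicity that follows immediately from $\mathrm{Scal}\geq 0$. I would set
\[
G(t) := \rho(t)^{n-2}\bigl(1-\dot\rho(t)^2\bigr), \qquad t\in[0,b].
\]
A direct differentiation, using Equation \eqref{scaleqn} to eliminate $\ddot\rho$, yields the key identity
\[
\dot G(t) \;=\; \frac{\dot\rho(t)\,\rho(t)^{n-1}\,\mathrm{Scal}(t)}{n-1}.
\]
Because $\mathrm{Scal}\geq 0$ and $\rho\geq 0$, the sign of $\dot G$ matches the sign of $\dot\rho$.

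With this in hand, I would argue by contradiction. Suppose $\dot\rho(t^\ast)>1$ for some $t^\ast\in(0,b)$. Because $\dot\rho(0)=1$ and $\dot\rho(b)=-1$, continuity and the intermediate value theorem produce $t_0:=\sup\{t\in[0,t^\ast]:\dot\rho(t)=1\}$ and $t_1:=\inf\{t\in[t^\ast,b]:\dot\rho(t)=1\}$, with $\dot\rho\geq 1$ throughout $[t_0,t_1]$. On this interval $\dot G\geq 0$, so $G$ is non-decreasing; the endpoint condition $G(t_0)=\rho(t_0)^{n-2}\cdot 0=0$ then forces $G\geq 0$ on $[t_0,t_1]$. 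But $G(t^\ast)=\rho(t^\ast)^{n-2}(1-\dot\rho(t^\ast)^2)<0$, a contradiction. The case $\dot\rho(t^\ast)<-1$ will be symmetric: one selects $t_0<t^\ast<t_1$ at which $\dot\rho=-1$ so that $\dot\rho\leq -1$ on $[t_0,t_1]$, giving $\dot G\leq 0$ (hence $G$ non-increasing) and, via $G(t_1)=0$, again $G\geq 0$ on $[t_0,t_1]$, contradicting $G(t^\ast)<0$.

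The only step that requires any real thought is spotting the integrating factor $\rho^{n-2}$: this is precisely what turns the Abreu-type scalar curvature expression into the clean total derivative above, after which the sign of $\dot G$ is dictated solely by $\dot\rho$. Once $G$ is in hand, the remainder is a short topological argument using $\dot\rho(0)=1$, $\dot\rho(b)=-1$, and the vanishing of $\rho$ at the endpoints (so the argument is unaffected in the edge cases $t_0=0$ or $t_1=b$, since $\rho^{n-2}$ still annihilates the endpoint values of $G$).
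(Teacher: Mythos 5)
Your argument is correct, but it takes a genuinely different route from the paper. The paper's proof is a two-line extremum argument: if $|\dot{\rho}|>1$ somewhere, then since $\dot{\rho}=\pm 1$ at the endpoints, $\dot{\rho}$ attains a global maximum or minimum exceeding $1$ in absolute value at an interior point $t^{\ast}$; there $\ddot{\rho}(t^{\ast})=0$, so Equation (\ref{scaleqn}) collapses to $\mathrm{Scal}(t^{\ast})=(n-1)(n-2)\bigl(1-\dot{\rho}^{2}(t^{\ast})\bigr)/\rho^{2}(t^{\ast})<0$, a contradiction. You instead introduce the integrating factor $\rho^{n-2}$ and the quantity $G=\rho^{n-2}(1-\dot{\rho}^{2})$; I have verified the identity $\dot{G}=\dot{\rho}\,\rho^{n-1}\,\mathrm{Scal}/(n-1)$ and the first-crossing argument that follows, including the endpoint cases (where $G$ vanishes because $\dot{\rho}^{2}=1$ there), so the proof is sound. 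Comparing the two: the paper's route is shorter and requires no computation beyond evaluating the curvature at a critical point of $\dot{\rho}$, but as written it degenerates when $n=2$, since the coefficient $(n-1)(n-2)$ vanishes and the scalar curvature at $t^{\ast}$ is then $0$ rather than strictly negative; your monotonicity argument handles $n=2$ uniformly (there $G=1-\dot{\rho}^{2}$ and $\dot{G}=\dot{\rho}\,\rho\,\mathrm{Scal}$). Your approach also delivers more than the lemma asks for: under $\mathrm{Scal}\geq 0$ the quantity $\rho^{n-2}(1-\dot{\rho}^{2})$ is monotone in the direction dictated by the sign of $\dot{\rho}$, a quantitative refinement that the pointwise extremum argument does not give.
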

\begin{proof}
Suppose the bound does not hold, then the boundary conditions imply that $\dot{\rho}$ has a global maximum or minimum, $t^{\ast}$ say, where $t^{\ast}\in(0,b)$. From Equation (\ref{scaleqn}), at such a point the scalar curvature is given by
$$
\mathrm{Scal}(t^{\ast}) = (n-1)(n-2)\frac{1-\dot{\rho}^{2}(t^{\ast})}{\rho^{2}(t^{\ast})}.
$$
If $|\dot{\rho}(t^{\ast})|>1$ there is now an immediate contradiction as the scalar curvature would be strictly negative at this point. 
\end{proof}
We will now use a new coordinate $s$ defined by a map $s(t):[0,b]\rightarrow [0,1]$ given by
$$s(t):=\int_{0}^{t} \rho^{n-1}(\tau) d\tau.$$
As the integrand is positive, $s(t)$ is invertible and we denote its inverse as $t(s)$.   
In these $s$--coordinates the metric becomes
\begin{equation}\label{gc1met}
g = \frac{ds^{2}}{\Phi^{2n-2}}+\Phi^2d\mathbb{S}_{n-1}^{2},
\end{equation}
where $\Phi(s) := \rho(t(s))$. 

Straightforward calculation, with the convention that derivatives with respect to $s$ are denoted with a  prime,  yields
$$
\dot{\rho}(t(s)) = \Phi'(s)(\Phi(s))^{n-1} = \frac{1}{n}(\Phi^{n})',
$$
and so
$$ 
\Phi^{n}(0) = \Phi^{n}(1)=0, \qquad (\Phi^{n})'(0) = -(\Phi^{n})'(1) = n.
$$
Combining this with Lemma \ref{rholem} gives
\begin{lemma}\label{Philem}
	Let $\Phi:[0,1]\rightarrow\mathbb{R}$ be as  above. Then 
	$$\Phi(s)\leq \Phi_{\max}(s),$$
	where
	$$\Phi_{\max}(s) = \left\{\begin{array}{c} ^{n}\sqrt{ns} \quad \mathrm{if} \quad 0 \leq s \leq \dfrac{1}{2}, \\
	^{n}\sqrt{\dfrac{n}{2}-n(s-\dfrac{1}{2})} \quad \mathrm{if} \quad \dfrac{1}{2} \leq s \leq 1.
	\end{array}\right.$$	 
\end{lemma}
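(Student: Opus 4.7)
The plan is to exploit the already-established inequality $|\dot{\rho}| \leq 1$ from Lemma \ref{rholem} and translate it directly into a constraint on the derivative of $\Phi^{n}$, after which the desired bound will follow from an elementary integration.

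First, I would introduce the auxiliary function $\Psi(s) := \Phi^{n}(s)$ to simplify the boundary data. The identity $\dot{\rho}(t(s)) = \frac{1}{n}(\Phi^{n})'(s)$ recorded just before the lemma statement shows that $\Psi'(s) = n\dot{\rho}(t(s))$, so Lemma \ref{rholem} gives $|\Psi'(s)| \leq n$ for all $s \in [0,1]$. The boundary values of $\Phi^{n}$ recorded above the lemma give $\Psi(0) = \Psi(1) = 0$ as well.

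Next, I would obtain the two-sided envelope by integrating this bound from either endpoint. Integrating $\Psi'$ from $0$ yields
\[
\Psi(s) = \int_{0}^{s} \Psi'(\tau)\,d\tau \leq ns \qquad \text{for every } s \in [0,1],
\]
while integrating from $1$ yields
\[
\Psi(s) = -\int_{s}^{1} \Psi'(\tau)\,d\tau \leq n(1-s) = \tfrac{n}{2} - n\bigl(s - \tfrac{1}{2}\bigr) \qquad \text{for every } s \in [0,1].
\]
Both bounds hold on all of $[0,1]$ and coincide at the midpoint $s = \tfrac{1}{2}$; the first is sharper on $[0, \tfrac{1}{2}]$ and the second on $[\tfrac{1}{2},1]$, which matches the piecewise definition of $\Phi_{\max}^{\,n}$.

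Finally, since $\Phi \geq 0$ and $x \mapsto x^{1/n}$ is monotone increasing on $[0,\infty)$, taking $n$-th roots on each sub-interval gives $\Phi(s) \leq \Phi_{\max}(s)$ as required. There is no real obstacle here: the only substantive content is the translation $|\dot\rho|\leq 1 \Longleftrightarrow |(\Phi^n)'|\leq n$, after which the lemma reduces to the mean-value observation that a function vanishing at both endpoints with derivative bounded by $n$ in absolute value lies below the tent function $\min(ns, n(1-s))$.
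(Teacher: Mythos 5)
Your proof is correct and is essentially the argument the paper intends: the paper simply states that Lemma \ref{rholem} combined with the boundary data $(\Phi^{n})(0)=(\Phi^{n})(1)=0$ and $|(\Phi^{n})'|\leq n$ "gives" the lemma, and your integration from each endpoint followed by taking $n$-th roots is exactly the omitted elementary step. Nothing further is needed.
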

We can now prove Theorem \ref{C1thm}. It essentially follows from the fact that $\Phi_{max}$ is independent of metric. 
\begin{proof}
Working in the `$s$' coordinate, we consider the subspace $V_{k}\subset W^{1,2}$ given by 
$$V_{k}: =\mathrm{span}\left\{1,s,s^{2},...,s^{k}\right\}.$$
We take $\varphi\in\mathbb{S}^{k}\subset V_{k}$ (where we use the obvious coordinates on $V_{k}$ to define the sphere) and consider
$$R(\varphi):=\frac{\int_{\mathbb{S}^{n}}|\nabla \varphi|^{2} dV_{g}}{\int_{\mathbb{S}^{n}} \varphi^{2} dV_{g}}.$$ 
Using the specific form of the metric $g$ in Equation (\ref{gc1met}) we obtain
$$R(\varphi) = \frac{\int_{0}^{1}\Phi^{2n-2}(\varphi')^{2} ds}{\int_{0}^{1}\varphi^{2} ds} \leq \frac{\int_{0}^{1}\Phi_{\mathrm{max}}^{2n-2}(\varphi')^{2} ds}{\int_{0}^{1}\varphi^{2} ds}$$
where the last inequality uses Lemma \ref{Philem}.  Thus the Raleigh quotient is clearly bounded above by a quantity that does not depend upon the metric $g$.  The result follows from the min-max characterisation of the $k^{th}$ eigenvalue.
\end{proof}

\end{document}